%
%
%


\documentclass{amsproc}

\usepackage{amssymb}

\usepackage{graphicx}

\usepackage[cmtip,all]{xy}


\usepackage[all]{xy}
\usepackage{amsthm}
\usepackage{amscd}
\usepackage{amsmath}
\usepackage{amsfonts}
\usepackage{amssymb}
\usepackage{color}
\usepackage{stmaryrd}
\usepackage{graphicx}%
\usepackage{hyperref}
\hypersetup{
    bookmarks=true,         
    unicode=false,          
    pdftoolbar=true,        
    pdfmenubar=true,        
    pdffitwindow=false,     
    pdfstartview={FitH},    
    pdftitle={My title},    
    pdfauthor={Author},     
    pdfsubject={Subject},   
    pdfcreator={Creator},   
    pdfproducer={Producer}, 
    pdfkeywords={keyword1, key2, key3}, 
    pdfnewwindow=true,      
    colorlinks=false,       
    linkcolor=red,          
    citecolor=green,        
    filecolor=magenta,      
    urlcolor=cyan           
}


\newtheorem{theorem}{Theorem}[section]

\theoremstyle{definition}
\newtheorem{definition}[theorem]{Definition}
\newtheorem{example}[theorem]{Example}
\newtheorem{proposition}[theorem]{Proposition}

\theoremstyle{remark}
\newtheorem{remark}[theorem]{Remark}

\numberwithin{equation}{section}

\newcommand*{\emptycomment}[1]{}

\begin{document}

\title{Universal Finite Subgroup of the Tate Curve}



\author{Zhen Huan}

\address{Zhen Huan, Center for Mathematical Sciences,
Huazhong University of Science and Technology, Hubei 430074, China} \curraddr{}
\email{2019010151@hust.edu.cn}
\thanks{The author is supported by  the Young Scientists Fund of the National Natural Science Foundation of China (Grant No. 11901591) for the project  "Quasi-elliptic cohomology and its application in geometry and topology", and the research funding from Huazhong University of Science and Technology.}


\subjclass[2010]{Primary 55}

\date{}

\begin{abstract}

In \cite{KM85} Katz and Mazur discuss the moduli problem of the subgroup-schemes of elliptic curves. We give the classification of the finite subgroups of the Tate curve in \cite{HuanPower}. Moreover, Katz and Mazur define the universal finite subgroup of an elliptic curve.
In this paper we give an explicit construction of the universal finite subgroup of the Tate
curve via isogenies  and the stringy power operation of Tate K-theory.
\end{abstract}

\maketitle


\section{Introduction}

An elliptic curve $E\longrightarrow S$ over a base $S$ is an smooth and proper abelian group $S$-scheme $E$ whose fiber at every geometric point is an elliptic curve. In \cite[Chapter 6]{KM85}
Katz and Mazur discussed the moduli problem [N-Isog], where the symbol [N-Isog] means isogenies with finite kernel of order $N$. The $S$-scheme [N-Isog]($E/S$) is defined to be the set of finite locally free commutative
$S$-subgroup-schemes $G<E[N]$ which are of rank $N$ over $S$. In other words, [N-Isog]($E/S$) is the set of subgroup schemes of rank $N$ in $E$.
In \cite[Proposition 6.5.1, Theorem 6.8.1]{KM85} Katz and Mazur prove that this moduli problem is relatively representable and is finite and flat over (Ell).

Let $E$ be an elliptic curve over a commutative ring $A$. Among the subgroup-schemes of rank $N$ in $E[N]$, based on the idea in \cite{KM85}, we define  the universal finite subgroup of $E$. Fix an integer $N\geq 0$. There exists a ring homomorphism $A\longrightarrow B$ and a subgroup
$G < E_B$ of rank $N$ in $E_B$, where $E_B$ is the pullback of $E$ over $\mbox{Spec} \; (B)$,    with the following property.  The pair $(B, G<E_B)$ is the universal subgroup of rank $N$ of $E\longrightarrow \mbox{Spec}\;(A)$
in the sense that: given a ring homomorphism $A\longrightarrow C$ and a subgroup $H$ of rank $N$ in $E_C$, there exists a unique ring homomorphism $g: B\longrightarrow C$
compatible with the maps from $A$ such that $H=g^*(G)$,   i.e.    there is a pullback diagram in schemes of the form

\begin{equation}\xymatrix{H \ar@{>->}[r] \ar[d]&E_C \ar[d] \ar[r] &\mbox{Spec}\;(C) \ar[d] \\
G\ar@{>->}[r] &E_B\ar[r] \ar[d] &\mbox{Spec}\;(B)\ar[d] \\ & E\ar[r] &\mbox{Spec}\;(A).}\label{universaldef}\end{equation}

In this paper we show the universal finite subgroup of the Tate curve exists and provide an explicit construction of it.

 The Tate curve $Tate(q)$ is an  elliptic curve
over Spec $\mathbb{Z}((q))$. The formal group of it is the multiplicative group. Thus, it is a form of $K$-theory. We call the theory $K_{Tate}$. It is the elliptic cohomology theory that is first discovered; See \cite{Mor89}. 
In \cite{HuanPower} we discuss the moduli problem [N-Isog] for the Tate curve and prove in \cite[Theorem 7.4]{HuanPower}  that the finite subgroups of order $N$ of the Tate curve $Tate(q)$ can be classified by the Tate K-theory of the symmetric group $\Sigma_N$ modulo a certain transfer ideal. 
The application of an intermediate theory, quasi-elliptic cohomology, is essential in the
proof of this classification theorem, as shown in Section 6.3 in \cite{HuanPower}.

In this paper we construct the universal finite subgroup of order $N$ of the Tate curve $Tate(q)$. 
The finite subgroups of the Tate curve are the kernels
of isogenies. In light of this fact, we construct the universal finite subgroup of $Tate(q)$ as the kernel of an isogeny. Furthermore, as shown in 
\cite{AndoIsog}, there is some correspondence between  isogenies and operations among complex oriented cohomology theories under some condition.
By \cite[Theorem A, Section 6.3]{AndoIsog}, the isogenies of certain form give rise to operations of Tate K-theory with proper coefficient rings. The construction of the isogenies will be recalled in Section \ref{tatecurve}.  Especially, these operations have relations with the additive power operation of Tate K-theory, which is constructed from the stringy power operation of Tate K-theory \cite{Gan07}.

In \cite{HuanPower} we present a construction of the stringy power operation. We first construct a power operation $\{\mathbb{P}_N\}_N$ of quasi-elliptic cohomology, which can be viewed as the Tate K-theory with coefficient ring $\mathbb{Z}[q^{\pm}]$ and is introduced in Section \ref{qelldef}. This power operation is  related to the level
structure of the Tate curve. Its construction   mixes  power operations in
K-theory with the natural operations of dilating and rotating loops, and can be generalized to other equivariant cohomology theories.
 $\{\mathbb{P}_N\}_N$ can extend uniquely to the stringy power operation of Tate K-theory.
Moreover, from the power operation, we construct in  \cite[Proposition 6.5]{HuanPower}
the additive power operation $$\overline{P}_N: QEll(X/\!\!/G)\longrightarrow QEll(X/\!\!/G)\otimes_{\mathbb{Z}[q^{\pm}]} QEll(\mbox{pt}/\!\!/{\Sigma_N})/\mathcal{I}^{\Sigma_N}_{tr},$$ which is a ring homomorphism.
In \cite{AHS04} Ando, Hopkins and Strickland discuss the additive power operation
of Morava E-theories $$E^0\longrightarrow E^0(B\Sigma_{p^k})/I_{tr}.$$ Applying
Strickland's theorem in \cite{Str98} they show that it has a nice algebro-geometric interpretation in terms of the formal group and it takes the quotient by the universal
subgroup. The additive operation $\overline{P}_N$ plays an essential part in
the construction of the isogeny whose kernel is the universal
subgroup of order $N$ of the Tate curve.

In Section \ref{maincc}, we  prove the main theorem that the finite subgroup constructed in Section \ref{mainpart} is indeed the universal 
finite subgroup of the Tate curve..

\begin{theorem} The  universal finite subgroup of order $N$ of $Tate(q)$ is the pair $$G_{univ}:=(D_N, \mbox{Ker} (\psi)< i_N^*Tate(q)[N]),$$
where the $\mathbb{Z}((q))-$algebra $D_N$ and the isogeny $\psi$ are defined in Section \ref{mainpart},
in the sense that for any
$\mathbb{Z}((q))$-algebra $R$, there is a 1-1 correspondence which is natural
\begin{equation}\xymatrix{\{\mathbb{Z}((q))-\mbox{algebra maps }D_N\longrightarrow R
\}\ar@{^{<}-_{>}}[d]\\  \{\mbox{finite subgroup schemes }
G\leqslant Tate(q)[N]_R \mbox{   of order }N\}}
\end{equation} where $Tate(q)[N]_R$ is the
pullback
$$\xymatrix{Tate(q)[N]_R\ar[r]\ar[d] &Tate(q)[N]\ar[d] \\ \mbox{Spec}\; (R)\ar[r]
&\mbox{Spec}\;(\mathbb{Z}((q )) )}.$$ It is a group scheme over
$\mbox{Spec}\;( R)$. In other words, given a subgroup scheme
$G\leqslant Tate(q)[N]_R$ of degree $N$, there exists a unique pullback
square
\begin{equation}\xymatrix{G \ar@{^{(}->}[r]\ar@{^{(}->}[d] &\mbox{Ker} (\psi) \ar@{^{(}->}[d] \\ Tate(q)[N]_R\ar[r]\ar[d] &i_N^*Tate(q)[N] \ar[d] \\
\mbox{Spec} \; (R)\ar[r] &\mbox{Spec}\; (D_N)
.}
\end{equation} \label{mainthm0}
\end{theorem}

\bigskip

In Section \ref{tatecurve}, we recall the Tate curve and its torsion points. 
We discuss the finite subgroups of the Tate curve and their relation with isogenies. In Section
\ref{qellshort}, we recall Tate K-theory. Some constructions are made via quasi-elliptic cohomology.  
We also recall the classification theorems of  the Tate curve and the power operations of the theories
 that we need later in Section \ref{mainpart}.
The main references are \cite{HuanPower} and
\cite{Rez11}. 
In  Section \ref{mainpart} we construct the universal finite subgroup of $Tate(q)$ explicitly as the kernel of an isogeny constructed from the additive power operation. In Section \ref{maincc}  we show that the construction in Section \ref{mainpart} is indeed the universal finite subgroup and prove the main theorem.

\bigskip

\textbf{Acknowledgement} I would like to thank Charles Rezk. This project is motivated and directed by him. We had many helpful
conversations. I would also like to thank Matthew Ando and Nathaniel Stapleton for several discussions and helpful remarks.

\section{The Tate curve and its finite subgroups}\label{tatecurve}

In this section we introduce the Tate curve and its torsion points. We discuss the isogenies between the Tate curve in Example \ref{excisog}, 
\ref{fisogtate} and \ref{keyisogtate}, and the finite subgroups of it as kernel of isogenies.
 The main references are   \cite[Section 2.6]{AHS} and
  \cite[Section 8.7- 8.8]{KM85}.

The Tate curve  $Tate(q)$  is the elliptic curve  \begin{equation}E_q:
y^2+xy=x^3+a_4x+a_6 \label{tatedef1}\end{equation}  whose coefficients are given by the formal
power series in $\mathbb{Z}((q))$:\begin{equation} a_4(q)=-5\sum_{n\geqslant 1} n^3q^n/(1-q^n)   \,\,\,\,\,\,\,\,\,\,\,\,\,\,\,   a_6(q) =-\frac{1}{12}\sum_{n\geqslant 1}(7n^5+5n^3)q^n/(1-q^n). \label{tatedef2}\end{equation}

The Tate curve $Tate(q)$,  defined by \eqref{tatedef1} and  \eqref{tatedef2}, is a pointed curve of genus 1 over $\mathbb{Z}((q))$. 
It is a one-dimensional abelian group.
There is an isomorphism \begin{equation}
    \widehat{\mathbb{G}_m}\cong \widehat{Tate(q)}
\end{equation} of formal groups; see \cite[VII, 1.16]{DR73}. 
Since its formal group is multiplicative, the cohomology theory associated to the Tate curve is a form of $K$-theory; we call it Tate K-theory $K_{Tate}$. 

It is shown in \cite[Theorem 3.1 (b)]{Silverman1} that the Weierstrass cubic \eqref{tatedef1} has discriminant 
\begin{equation}
    \Delta(q)=q\prod_{n\geq 1} (1-q^n)^{24}.
\end{equation} This implies that over $\mathbb{Z}((q))$ the Tate curve is an elliptic curve. It is modeled on the multiplicative parameterization of elliptic curves over $\mathbb{C}$: for any complex number $q$ with $0< |q| <1$, $\mathbb{C}^*/q^{\mathbb{Z}}$ 
is an elliptic curve which fits into the exact sequence 
\begin{equation}
    \xymatrix{1\ar[r] &q^{\mathbb{Z}} \ar[r] &\mathbb{C}^* \ar[r] &\mathbb{C}^*/q^{\mathbb{Z}} \ar[r] &1}.
\end{equation}
Thus, we obtain a family of elliptic curves \[\mathbb{C}^*/q^{\mathbb{Z}}\rightarrow D^*(\mathbb{C})\] where $D^*(\mathbb{C})$ is the punctured open unit disk
\[D^*(\mathbb{C})=\{q\in\mathbb{C} \;  | \;  0< |q| < 1\}.\]
For $q\in D^*(\mathbb{C})$, $a_4(q)$ and $a_6(q)$ converge to complex numbers. Let $E_q$ denote the resulting elliptic curve over $\mathbb{C}$. 
As shown in \cite[p. 410]{Silverman1}, we have an analytic isomorphism \[\mathbb{C}^*/q^{\mathbb{Z}} \cong E_q.\]

If $F$ is a
non-archimedean field, let 
\[D^*(F)=\{q\in F \;  | \;  0< |q| < 1\}.\] We have the isomorphism of sets
\[\hom^{cts}(\mathbb{Z}((q)), F) \rightarrow D^*(F), \quad g \mapsto g(q).\] 
For any continuous homomorphism \[\mathbb{Z}((q))\xrightarrow{g} F,\] there is an isomorphism
of groups \[F^*/g(q)^{\mathbb{Z}} \cong g^* Tate (F).\]
See \cite[p. 423]{Silverman1} for more details.

\begin{example} \label{excisog}

In this example we describe the finite subgroups and the 
isogenies for $E_q$, the analytic Tate curve over $\mathbb{C}$.

Let $N$ be any positive integer. To give a subgroup of  order $N$ of $E_q$, we pick a pair of integers
$(d, e)$ such that $N=de$ and
$d, e\geq 1$ and let $q'$ be a nonzero complex number such that
$q^d=q'^e$. Consider the isogeny
\begin{align}\psi_d: \mathbb{C}^*/q^{\mathbb{Z}}&\longrightarrow \mathbb{C}^*/q'^{\mathbb{Z}} \label{psidisg}\\x&\mapsto x^d. \end{align}
It is well-defined since $\psi_d(q^{\mathbb{Z}})\subseteq
q'^{\mathbb{Z}}$.

We can check that Ker$(\psi_d)$ has order $N$. Explicitly, it is
$$\{\mu_d^nq^{\frac{m}{e}}q^{\mathbb{Z}}  \;  |  \; n, m\in\mathbb{Z}\}$$ where $\mu_d$ is a $d$'th primitive root of 1 and $q^{\frac{1}{e}}$ is an $e$'th primitive root of $q$.
In fact $$\{\mbox{Ker}(\psi_d)   \;   | \; d \mbox{  divides
}N\mbox{  and  }d\geq 1\}$$ gives all the subgroups of
$\mathbb{C}^*/q^{\mathbb{Z}}$ of order $N$.

\end{example}

\begin{example} \label{fisogtate}
It is shown in \cite[Exercise 5.10]{Silverman1} a generalization of Example \ref{excisog}. 

For a $p-$adic field $K$, if $q, q'\in K$, $0<|q|,|q'| <1$ , and $q^d=q'^e$, then the function \[\overline{K}^*/q^{\mathbb{Z}} \rightarrow \overline{K}^*/q'^{\mathbb{Z}}, \quad u\mapsto u^d\]  lifts to an isogeny $E_q\rightarrow E_{q'}$ of elliptic curves over $K$, where $E_q$ and $E_{q'}$ are defined by the Tate curve equations
\eqref{tatedef1} \eqref{tatedef2}.

\bigskip

Moreover, as shown in 
\cite[Section 6.3]{AndoIsog}, if $F$ is a non-archimedean field, there is an isogeny \[F^* /q^{\mathbb{Z}} \rightarrow 
F^* /q^{\mathbb{Z}}, \quad z\mapsto z^d\] for any positive integer $d$.

\end{example}
We recall a model for the torsion points of the Tate curve from \cite[Section 8.7]{KM85} and \cite[Section 2.3]{AndoIsog}. The $N$-torsion points
$T[N]$ of it is the disjoint union of $N$ schemes \[T_0[N], \;
\cdots \; T_{N-1}[N],\] where, for each integer $0\leq i< N$,
\begin{equation} T_i[N]=\mbox{Spec} \;(\mathbb{Z}[q^{\pm}][x]/(x^N-q^i)).\label{tindef}\end{equation} It fits
into a short exact sequence  of group schemes  \cite[(8.7.1.4)]{KM85} \begin{equation}\xymatrix{0\ar[r]
&\mu_N\cong T_0(N)  \ar[r]^>>>>>>{a_N} 
&T[N]\ar[r]^{b_N}
&\mathbb{Z}[\frac{1}{N}]/\mathbb{Z}\ar[r] &0}\label{tnse} \end{equation}
where $a_N$ is the inclusion sending $\zeta\in \mu_N$ to $(\zeta, 0) \in T[N]$, and $b_N$ sends $(X, \frac{i}{N})\in T[N]$ to $\frac{i}{N}$ mod $\mathbb{Z}$.

Then we recall a
smooth one-dimensional commutative group scheme $T$ over
$\mathbb{Z}[q^{\pm}]$, which is defined in \cite[Section 8.7]{KM85}. As a scheme, it is the disjoint union of schemes $T_{\alpha}$, indexed by $\alpha$'s running over all rational numbers in the interval $[0, 1)$, where each $T_{\alpha}$ is the scheme $\mathbb{G}_m$.

It sits in a short exact sequence of
group-schemes over $\mathbb{Z}[q^{\pm}]$:
$$0\longrightarrow \mathbb{G}_m\longrightarrow T\longrightarrow \mathbb{Q}/\mathbb{Z}\longrightarrow 0.$$

For any $\mathbb{Z}[q^{\pm}]$-algebra $R$ with connected spectrum, 
$$T(R)=\frac{R^*\times\mathbb{Q}}{\langle (q, -1)\rangle}$$ is still a group with the group structure defined in
\cite[(8.7.2.3), Section 8.7]{KM85}, i.e. for any $(X, \alpha)$, $(Y, \beta)$ in $T(R)$,
\begin{equation}
    (X, \alpha)\cdot (Y, \beta) := \begin{cases}(XY, \alpha+\beta) &\text{ if } \alpha+\beta<1;\\
    (\frac{XY}{q}, \alpha+\beta-1) &\text{  if  } \alpha+\beta \geq 1.
    \end{cases}
\end{equation}
Thus, $T$ is a functor from the category of
$\mathbb{Z}[q^{\pm}]$-algebras to the category of abelian groups.

Over a faithfully flat $\mathbb{Z}[q^{\pm}]$-algebra $R$ containing a compatible system of $N$'th roots $q^{\frac{1}{N}}$ of $q$ for every $N$, i.e. for every integer $N\geq 1$, we are given $Y_N\in R^*$ such that $Y_1=q$ and $(Y_{NM})^M= Y_N$ for every $M$, $N\geq 1$, $T$ is isomorphic to the product $$\mathbb{G}_m\times \mathbb{Q}/\mathbb{Z}$$ and the torsion points $T_{torsion}$ is therefore isomorphic to  
$$\mu_{\infty}\times \mathbb{Q}/\mathbb{Z}.$$

We have the conclusion below, which is Theorem 8.7.5 in
\cite{KM85}.

\begin{theorem}
There exists a faithfully flat $\mathbb{Z}[q^{\pm}]$-algebra $R$,
an elliptic curve $E/R$, and an isomorphism of ind-group-schemes
over $R$ $$T_{torsion}\otimes_{\mathbb{Z}[q^{\pm}]}
R\buildrel\sim\over\longrightarrow E_{tors},$$ such that for every
$N\geq 1$, the isomorphism on $N$-torsion points $T[N]\otimes
R\buildrel\sim\over\longrightarrow E[N]$ is compatible with
$e_N$-pairings. \label{torsionT}
\end{theorem}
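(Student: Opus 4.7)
The plan is to realize both $T$ and $E_{tors}$ as the torsion of the Tate uniformization $\phi:\mathbb{G}_m\to Tate(q)$ whose kernel is $q^{\mathbb{Z}}$. Since the defining power series $a_4,a_6$ of $Tate(q)$ lie in $\mathbb{Z}[[q]]$ and the uniformization is itself given by further $q$-expansions, I first need to enlarge $\mathbb{Z}[q^{\pm}]$ to a faithfully flat $R$ in which those series converge and in which all roots $q^{1/N}$ of $q$ are available. A workable choice is the filtered colimit of the Kummer extensions $\mathbb{Z}[q^{\pm}]\to \mathbb{Z}[q^{\pm}][x]/(x^N-q)$ (faithfully flat at each stage, hence in the colimit), composed with a suitable $(q)$-adic henselization or completion that makes the relevant power series meaningful.

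Over such an $R$ I set $E = Tate(q)\times_{\mathbb{Z}((q))} R$. The Tate parametrization then produces a morphism of group schemes $\phi:\mathbb{G}_{m,R}\to E$ with kernel exactly $q^{\mathbb{Z}}$. For each $N$, the scheme of $N$-torsion preimages is
$$\{u\in\mathbb{G}_m\mid N\phi(u)=0\}=\{u\mid u^N\in q^{\mathbb{Z}}\}=\bigsqcup_{i=0}^{N-1}\{u\mid u^N=q^i\},$$
and modding out by $q^{\mathbb{Z}}$ identifies this with $E[N]$. The right-hand decomposition matches $T[N]=\bigsqcup_i T_i[N]=\bigsqcup_i\mathrm{Spec}(\mathbb{Z}[q^{\pm}][x]/(x^N-q^i))$ recalled above, so one obtains the desired map $T\otimes_{\mathbb{Z}[q^{\pm}]}R\to E_{tors}$ component-wise by $x\mapsto\phi(x)$. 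Compatibility as $N$ varies is automatic because $\phi$ is a group homomorphism, so the finite-level maps assemble into an ind-group-scheme map $T_{torsion}\otimes R\to E_{tors}$.

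Each finite level $T[N]\otimes R\to E[N]$ is an isomorphism because both sides are finite flat of rank $N^2$ over $R$ and the map is injective: the kernel of $\phi$ restricted to $\{u^N\in q^{\mathbb{Z}}\}$ is exactly $q^{\mathbb{Z}}$, which vanishes in the quotient. Compatibility with the $e_N$-pairings is then forced by the characterization $e_N(a_N(\zeta),\tau)=\zeta^{b_N(\tau)}$: this identity holds on $T[N]$ by construction, and holds on $E[N]$ for the Weil pairing of a Tate curve once the inclusion $\mu_N\hookrightarrow E[N]$ and projection $E[N]\to\mathbb{Z}/N\mathbb{Z}$ are read off from $\phi$.

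The main obstacle is the first step: constructing $R$ simultaneously faithfully flat over $\mathbb{Z}[q^{\pm}]$, large enough that the $q$-expansions defining the Tate uniformization converge in it, and containing all $q^{1/N}$. Once $R$ is in hand and the Tate parametrization is known to be a genuine morphism of group schemes (rather than a formal identity), the rest of the argument is essentially bookkeeping with the Kummer decomposition of $T[N]$ and a rank comparison on finite flat group schemes.
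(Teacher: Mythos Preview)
The paper does not prove this statement at all: it is quoted verbatim as Theorem~8.7.5 of Katz--Mazur \cite{KM85} and used only as background. So there is no ``paper's own proof'' to compare your argument against.

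That said, your proposal has a genuine gap, and it is exactly the one you flag at the end. You want an $R$ that is simultaneously (i) faithfully flat over $\mathbb{Z}[q^{\pm}]$ and (ii) large enough that the $q$-expansions defining the Tate uniformization make sense. These two requirements pull in opposite directions. In $\mathbb{Z}[q^{\pm}]$ the element $q$ is already a unit, so ``$(q)$-adic henselization or completion'' is vacuous; there is no $(q)$-adic topology to complete along. Any ring in which the series $a_4,a_6$ and the uniformization series genuinely converge must receive a map from $\mathbb{Z}((q))$ (or at least from some $q$-adically complete ring), and $\mathbb{Z}[q^{\pm}]\to\mathbb{Z}((q))$ is \emph{not} faithfully flat: for instance $1-q$ becomes a unit, so no prime of $\mathbb{Z}((q))$ lies over $(q-1)\subset\mathbb{Z}[q^{\pm}]$. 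Adjoining all $q^{1/N}$ does nothing to repair this. Thus your construction of $R$, as written, cannot satisfy both constraints, and the remainder of the argument (which is indeed routine bookkeeping once $\phi$ exists) never gets off the ground.

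Katz and Mazur avoid this trap by not using the Tate curve to produce $E$ at all. They instead observe that the scheme $T[N]$ over $\mathbb{Z}[q^{\pm}]$ carries, tautologically, the data of a $\Gamma$-structure for a suitable level moduli problem, and then invoke representability of that moduli problem to obtain a map from $\mathrm{Spec}\,\mathbb{Z}[q^{\pm}]$ (or a finite flat cover thereof) to the moduli scheme; the elliptic curve $E/R$ is the pullback of the universal curve. Faithful flatness then comes from the finite-flatness of the moduli problem over $(\mathrm{Ell})$, not from any analytic convergence. If you want to salvage your approach, you would need to replace the appeal to the analytic Tate parametrization by this kind of moduli-theoretic construction.
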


Thus, we have the unique isomorphism of ind-group-schemes on
$\mathbb{Z}((q))$: $$T_{torsion}\otimes_{\mathbb{Z}[q^{\pm}]}
\mathbb{Z}((q))\buildrel\sim\over\longrightarrow Tate(q)_{tors}.$$

\begin{example}\label{keyisogtate}
As shown in \cite{36179} and \cite{AndoIsog}, over $\mathbb{Z}((q))$ there is an isogeny \begin{equation}[d]: Tate(q) \rightarrow Tate(q^d)\label{[d]tate} \end{equation} with kernel $\mu_d$.
And over $\mathbb{Z}((q^{\frac{1}{e}}))$ there is an isogeny 
\[\pi_e: Tate(q)\rightarrow Tate(q^{\frac{1}{e}}), 
\] with kernel $\mathbb{Z}[\frac{1}{e}]/\mathbb{Z}$. Especially, 
the restriction \[\pi_e: Tate(q^e) \rightarrow Tate(q)\] is the dual isogeny of $[e]: Tate(q) \rightarrow Tate(q^e)$.

Let $D(d, e)$ denote the $\mathbb{Z}((q))$-algebra
\begin{equation} D(d, e): = \mathbb{Z}((q))[q'_{d, e}]/\langle q^d- q'^e_{d,e}\rangle.  \label{defded}\end{equation}

There are two ring homomorphisms
\[ \mathbb{Z}((q)) \xrightarrow{i_{d, e}} D(d, e), \quad q\mapsto q\] \[ \mathbb{Z}((q)) \xrightarrow{j_{d, e}} D(d, e), \quad q\mapsto q'_{d, e}.\] From them we can define the group schemes $i^*_{d, e }Tate(q)$ and $ j^*_{d, e}Tate(q)$ as the pullbacks below.
\begin{equation} \label{ijdepbk}
\xymatrix{i^*_{d, e}Tate(q) \ar[r] \ar[d] & Tate(q) \ar[d] \\
\mbox{Spec}\; (D(d, e)) \ar[r]_{i_{d, e}^*} &\mbox{Spec} \; (\mathbb{Z}((q)))} \quad     \xymatrix{j^*_{d, e}Tate(q) \ar[r] \ar[d] & Tate(q) \ar[d] \\
\mbox{Spec}\; (D(d, e)) \ar[r]_{j_{d, e}^*} &\mbox{Spec} \; (\mathbb{Z}((q)))}  
\end{equation}

We can define an isogeny over $D(d, e)$ by the composition
\begin{equation}\phi'_{d, e}: i_{d, e}^*Tate(q) = Tate (q) \xrightarrow{[d]} Tate(q^d) \xrightarrow{\pi_e} Tate(q'_{d, e})=j^*_{d, e}Tate(q).\label{phiso}\end{equation}

Let $N=de$. The kernel of $\phi'_{d, e}$ is the $D(d, e)$-scheme
\begin{equation}T[d, e] := \{ z \in T[N]\otimes_{\mathbb{Z}[q^{\pm}]}\mathbb{Z}((q)) \; | \; z^d= q'^i_{d, e} \mbox{ for some } i.\} \label{kerphi'}\end{equation}
It fits into a short exact sequence 
\[0\rightarrow \mu_d \rightarrow  T[d, e] \rightarrow  \mathbb{Z}[\frac{1}{e}]/\mathbb{Z}\rightarrow 0.\]

And $T[d, e]$ is a finite subgroup of $Tate(q)$ of order $N$. 
\end{example}

\section{Tate K-theory}\label{qellshort}

\subsection{Quasi-elliptic cohomology and Tate K-theory} \label{qelldef}

In this section we give an explicit description of Tate K-theory via quasi-elliptic cohomology, which can be defined in terms of  equivariant K-theories. 
For more details on quasi-elliptic cohomology, please refer to \cite{HuanPower}  and \cite{Rez11}.

Let $X$ be a $G$-space. Let $G^{tors}\subseteq G$ be the set of
torsion elements of $G$. Let $\sigma\in G^{tors}$. And let \[C_G(\sigma)= \{g\in G \; | \;  \sigma g =g \sigma.\}\]
denote the centralizer.
The fixed point
space \[X^{\sigma}:= \{x\in X \; | \; x\cdot \sigma =x . \} \] is a $C_G(\sigma)$-space. Let
\[\Lambda_G(\sigma):= C_G(\sigma ) \times \mathbb{R}/\langle (\sigma, -1 )\rangle \] We can define a
$\Lambda_G(\sigma)$-action on $X^{\sigma}$ by
$$[g, t]\cdot x:=g\cdot x.$$
Then quasi-elliptic cohomology of the orbifold $X/\!\!/G$ is
defined by

\begin{definition}
\begin{equation}QEll^*(X/\!\!/G):=\prod_{\sigma\in
G^{tors}_{conj}}K^*_{\Lambda_G(\sigma)}(X^{\sigma})=\bigg(\prod_{\sigma\in
G^{tors}}K^*_{\Lambda_G(\sigma)}(X^{\sigma})\bigg)^G,\end{equation} where
$G^{tors}_{conj}$ is a set of representatives of $G$-conjugacy
classes in $G^{tors}$.
\end{definition}

We have the ring homomorphism
$$\mathbb{Z}[q^{\pm}]=K^0_{\mathbb{T}}(\mbox{pt})\buildrel{\pi^*}\over\longrightarrow K^0_{\Lambda_G(g)}(\mbox{pt})\longrightarrow
K^0_{\Lambda_G(g)}(X)$$ where $\pi: \Lambda_G(g)\longrightarrow
\mathbb{T}$ is the projection $[a, t]\mapsto e^{2\pi i t}$ and the
second is via the collapsing map $X\longrightarrow \mbox{pt}$. So
$QEll^*(X/\!\!/G)$ is naturally a
$\mathbb{Z}[q^{\pm}]$-algebra. 

\begin{proposition}The relation between quasi-elliptic cohomology and
Tate K-theory is
\begin{equation}QEll^*(X/\!\!/G)\otimes_{\mathbb{Z}[q^{\pm}]}\mathbb{Z}((q))=K^*_{Tate}(X/\!\!/G).
\label{tateqellequiv}\end{equation}\end{proposition}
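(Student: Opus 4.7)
The plan is to verify the identity sector-by-sector, exploiting the common decomposition of both sides indexed by $G$-conjugacy classes of torsion elements. By definition,
$$QEll^*(X/\!\!/G) \;=\; \bigg(\prod_{g\in G^{tors}}K^*_{\Lambda_G(g)}(X^g)\bigg)^G,$$
and Ganter's loop-space construction from Section 2 of \cite{Gan07} presents $K^*_{Tate}(X/\!\!/G)$ as the $G$-invariants of a product over $g\in G^{tors}$ of $C_G(g)$-equivariant K-theories of the fixed-point spaces $X^g$, with coefficients involving $\mathbb{Z}((q))$ and a twist coming from the rotation action on the $g$-sector of the inertia orbifold. Thus it suffices to match each factor on the left, after base change along $\mathbb{Z}[q^{\pm}]\hookrightarrow\mathbb{Z}((q))$, with the corresponding factor on the right, and then verify compatibility with the $G$-conjugation action.

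First I would unpack the group $\Lambda_G(g)$: it sits in a central extension of compact Lie groups
$$1 \longrightarrow \mathbb{T} \longrightarrow \Lambda_G(g) \longrightarrow C_G(g)/\langle g\rangle \longrightarrow 1,$$
with the $\mathbb{T}$-factor acting trivially on $X^g$. Consequently $K^*_{\Lambda_G(g)}(X^g)$ is a finitely generated $\mathbb{Z}[q^{\pm}]$-module, naturally built from $K^*_{C_G(g)}(X^g)$, in which the generator $q\in K^*_\mathbb{T}(\mathrm{pt})$ corresponds to the rotation character under the projection $\pi\colon[a,t]\mapsto e^{2\pi i t}$ recalled above.

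Next, I would observe that base change along $\mathbb{Z}[q^{\pm}]\hookrightarrow\mathbb{Z}((q))$ absorbs the denominators arising from the finite cyclic identification by which $\Lambda_G(g)$ differs from $C_G(g)\times\mathbb{T}$ (through a relation involving the order of $g$), and yields exactly the $g$-sector of Ganter's formula. Taking $G$-invariants of the resulting sector-wise identification then produces the claimed equality of $\mathbb{Z}((q))$-algebras.

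I expect the main obstacle to be the careful bookkeeping of the $\mathbb{Z}[q^{\pm}]$-module structure combined with the character twist by $\langle g\rangle$. Specifically, one must verify that the fractional powers $q^{k/|g|}$ appearing in Ganter's formula correspond precisely to the character shifts imposed by the defining relation of $\Lambda_G(g)$, and that this matching is natural in $g$ under the conjugation $[g]\mapsto[hgh^{-1}]$, so that the identifications on individual sectors assemble to an isomorphism after taking $G$-invariants. Once this bookkeeping is set up uniformly in $g$, the remaining verification is formal.
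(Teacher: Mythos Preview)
The paper does not prove this proposition. It is stated in Section~\ref{qellshort} purely as background, with the reader referred to \cite{HuanPower} and \cite{Rez11} for the details of quasi-elliptic cohomology; no argument is given in the present paper.

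Your proposed route---decompose both sides over $G$-conjugacy classes of torsion elements, identify each $\Lambda_G(g)$-equivariant $K$-theory factor after base change to $\mathbb{Z}((q))$ with the corresponding twisted sector in Ganter's formula from \cite{Gan07}, and then pass to $G$-invariants---is exactly the argument carried out in those references. In particular, the mechanism you isolate (that the identification $g\sim$ root of $q$ built into $\Lambda_G(g)$ produces, after inverting and completing in $q$, the fractional $q$-powers in Ganter's loop-sector description) is the substantive point of the proof. So your outline is correct and coincides with the approach in the cited sources; there is simply nothing in the present paper to compare it against.

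One small remark on your write-up: you describe $K^*_{\Lambda_G(g)}(X^g)$ as ``a finitely generated $\mathbb{Z}[q^{\pm}]$-module''. Finite generation is not needed for, and not asserted by, the proposition; what matters is only the $\mathbb{Z}[q^{\pm}]$-\emph{algebra} structure coming from $\pi^*$, so that the tensor product $-\otimes_{\mathbb{Z}[q^{\pm}]}\mathbb{Z}((q))$ is defined. You should drop or weaken that clause when you write the argument out.
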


\subsection{Moduli Problems} \label{tatemoduli}

In this section  We discuss several moduli problems for the Tate curve involving the classification of $N$-torsion points and that of  finite subgroups.
The computation of quasi-elliptic cohomology  via representation theory plays a role in the study of the moduli problems.

We have the computation \cite[Example 3.3]{Huansurvey} 
that \begin{equation}QEll(\mbox{pt}/\!\!/ (\mathbb{Z}/N\mathbb{Z}))= \prod\limits_{k=0}^{N-1}\mathbb{Z}[q^{\pm}][x_k]/(x_k^N-q^k),\label{tatenellc}\end{equation}
where each $x_k$ is the representation of $\Lambda_{\mathbb{Z}/N\mathbb{Z}}(k)$
defined by
\begin{equation}\begin{CD}\Lambda_{\mathbb{Z}/N\mathbb{Z}}(k)=(\mathbb{Z}\times\mathbb{R})/(\mathbb{Z}(N,0)+\mathbb{Z}(1,k))
@>{[a,t]\mapsto[(kt-a)/N]}>> \mathbb{R}/\mathbb{Z}=\mathbb{T}
@>{q}>> U(1).\end{CD}\label{xk}\end{equation} 
By \eqref{tindef} and \eqref{tatenellc}, we have the isomorphism below.
\begin{proposition}$T[N]\cong \mbox{Spec}\;(QEll(\mbox{pt}/\!\!/
(\mathbb{Z}/N\mathbb{Z}))).$ \label{NtorsQEll} \end{proposition}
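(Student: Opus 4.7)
The plan is to prove this by directly comparing the explicit presentations of both sides, which have already been spelled out in the preceding discussion. On the scheme side, the excerpt records that $T[N]$ decomposes as the disjoint union
\[
T[N] = \coprod_{k=0}^{N-1} T_k[N], \qquad T_k[N] = \mathrm{Spec}\bigl(\mathbb{Z}[q^{\pm}][x]/(x^N - q^k)\bigr).
\]
On the cohomology side, the representation-theoretic computation (\ref{tatenellc}) gives
\[
QEll(\mathrm{pt}/\!\!/(\mathbb{Z}/N\mathbb{Z})) \;=\; \prod_{k=0}^{N-1}\mathbb{Z}[q^{\pm}][x_k]/(x_k^N - q^k),
\]
where the generator $x_k$ is the representation of $\Lambda_{\Sigma_N}(k)$ described in (\ref{xk}).

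Given these two presentations, my plan is to use the standard fact that $\mathrm{Spec}$ converts finite products of rings into disjoint unions of schemes, so that
\[
\mathrm{Spec}\bigl(QEll(\mathrm{pt}/\!\!/(\mathbb{Z}/N\mathbb{Z}))\bigr) \;=\; \coprod_{k=0}^{N-1} \mathrm{Spec}\bigl(\mathbb{Z}[q^{\pm}][x_k]/(x_k^N - q^k)\bigr).
\]
Then I would exhibit, for each $k$, the obvious isomorphism of $\mathbb{Z}[q^{\pm}]$-schemes $\mathrm{Spec}(\mathbb{Z}[q^{\pm}][x_k]/(x_k^N - q^k)) \xrightarrow{\sim} T_k[N]$ induced by the ring map sending $x \mapsto x_k$, and assemble these into the desired isomorphism by taking the disjoint union. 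The only thing to verify explicitly is that the isomorphism is compatible with the $\mathbb{Z}[q^{\pm}]$-algebra structures, which is immediate since the structure morphism on each factor is the natural inclusion $\mathbb{Z}[q^{\pm}] \hookrightarrow \mathbb{Z}[q^{\pm}][x_k]/(x_k^N - q^k)$, matching the one on $T_k[N]$.

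The genuine content of the proposition is therefore hidden in the input (\ref{tatenellc}): once the representation ring of $\Lambda_{\Sigma_N}(k)$ has been identified with $\mathbb{Z}[q^{\pm}][x_k]/(x_k^N - q^k)$ via the specific character $x_k$ of (\ref{xk}), the matching with the Katz--Mazur presentation of $T[N]$ is tautological. Consequently I do not expect any serious obstacle; the proof amounts to a side-by-side comparison and an invocation of $\mathrm{Spec}(\prod R_k) = \coprod \mathrm{Spec}(R_k)$.
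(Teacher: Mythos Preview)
Your proposal is correct and is precisely the argument the paper has in mind: the proposition is stated immediately after the computation (\ref{tatenellc}) and the description of $T[N]$ in Section~\ref{tatecurve}, with no further proof given, so the intended justification is exactly the factor-by-factor comparison you outline. There is nothing to add.
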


In \cite[Section 4]{HuanPower}, we construct a power operation $\mathbb{P}_N$
for quasi-elliptic cohomology, which extends uniquely to the stringy power operation $P_N^{string}$ for Tate K-theory \cite[Definition
3.15]{Gan13}. Via $\mathbb{P}_N$ we show by computing representation rings the following conclusion, which is part of \cite[Theorem 7.4]{HuanPower}. \begin{proposition}
\begin{equation}QEll(\mbox{pt}/\!\!/\Sigma_N)/\mathcal{I}^{\Sigma_N}_{tr}\cong
\prod_{N=de}\mathbb{Z}[q^{\pm}][q'_{d, e}]/\langle q^d-q'^e_{d, e}
\rangle,\label{ellc}\end{equation} where \[q'= \prod_{N=de}q'_{d, e}\] is the image of $q$
under the power operation $\mathbb{P}_N$ and \begin{equation}\mathcal{I}^{\Sigma_N}_{tr}:= \sum_{\substack{i+j=N,\\
N>j>0}}\mbox{Image}[\mathcal{I}^{\Sigma_N}_{\Sigma_i\times\Sigma_j}:
QEll(\mbox{pt}/\!\!/\Sigma_i\times\Sigma_j)\longrightarrow
QEll(\mbox{pt}/\!\!/\Sigma_N)]\label{transferidealqec}\end{equation}
is the transfer ideal for quasi-elliptic cohomology. The product
goes over all the ordered pairs of positive integers $(d, e)$ such
that $N=de$. \label{tategplem}\end{proposition}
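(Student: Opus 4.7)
The plan is to decompose
\[ QEll(\mathrm{pt}/\!\!/\Sigma_N) \;=\; \prod_{\lambda\vdash N} R\bigl(\Lambda_{\Sigma_N}(g_\lambda)\bigr) \]
by conjugacy class of $\Sigma_N$ (a partition $\lambda$ with representative $g_\lambda$ of cycle type $\lambda$), and to treat separately the \emph{rectangular} partitions $\lambda=(d^e)$ ($e$ cycles of length $d$, $de=N$) versus all others. Recall that for $\lambda$ with distinct cycle lengths $d$ of multiplicity $e_d$, one has $C_{\Sigma_N}(g_\lambda)=\prod_d(\mathbb{Z}/d\wr\Sigma_{e_d})$, and $\Lambda_{\Sigma_N}(g_\lambda)$ is built from this centralizer and $\mathbb{R}$ by a diagonal $\mathbb{Z}$-quotient via $g_\lambda$.

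If $\lambda$ has at least two distinct cycle lengths, pick one size and group the cycles of that size into $\lambda_1$ and the rest into $\lambda_2$, so that $g_\lambda\in H:=\Sigma_{|\lambda_1|}\times\Sigma_{|\lambda_2|}$ with both factors nontrivial. Because the centralizer factors over distinct cycle lengths, $C_H(g_\lambda)=C_{\Sigma_N}(g_\lambda)$; hence the inclusion $\Lambda_H(g_\lambda)\hookrightarrow\Lambda_{\Sigma_N}(g_\lambda)$ is an isomorphism of compact Lie groups, and the induction on quasi-elliptic cohomology is the identity on the $\lambda$-summand. This summand therefore lies entirely in $\mathcal{I}^{\Sigma_N}_{tr}$ and dies in the quotient.

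For rectangular $\lambda=(d^e)$, $C=\mathbb{Z}/d\wr\Sigma_e$ with $g_\lambda$ the diagonal generator of the base $(\mathbb{Z}/d)^e$, and $\Lambda_{\Sigma_N}(g_\lambda)$ fits in a central extension
\[ 1\to\mathbb{R}/d\mathbb{Z}\to\Lambda_{\Sigma_N}(g_\lambda)\to\bigl((\mathbb{Z}/d)^e/\mathrm{diag}\bigr)\rtimes\Sigma_e\to 1. \]
The only transfers hitting this summand come from the Young subgroups $\Sigma_{kd}\times\Sigma_{(e-k)d}$ obtained by peeling off $k$ of the $e$ equal cycles ($1\le k\le e-1$). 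By Frobenius reciprocity and the projection formula, the image of these transfers is identified with the ideal of elements that fail to be $\Sigma_e$-invariant, so the resulting quotient is the $\Sigma_e$-invariant subring. Applying the wreath-product character theory of $C$ together with~\eqref{tatenellc} applied to the underlying cyclic generator gives the presentation $\mathbb{Z}[q^{\pm}][q']/(q^e-q'^d)$. As $\lambda=(d^e)$ runs over rectangular partitions of $N$, the pair $(e,d)$ runs over all ordered divisor pairs of $N$, recovering the full product $\prod_{N=de}\mathbb{Z}[q^{\pm}][q']/(q^d-q'^e)$. Finally, a direct comparison with the construction of $\mathbb{P}_N$ in \cite{HuanPower} identifies the generator $q'$ with the image $\mathbb{P}_N(q)$.

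The main obstacle is the third paragraph: showing precisely that the image of the transfers on each rectangular summand is the non-$\Sigma_e$-invariant ideal, and identifying the generator of the resulting invariant quotient with $\mathbb{P}_N(q)$ on the nose. This requires careful character-theoretic computations in the wreath product $\mathbb{Z}/d\wr\Sigma_e$ adapted to $\Lambda_{\Sigma_N}(g_\lambda)$, woven together with the explicit formula for $\mathbb{P}_N$ of \cite{HuanPower}.
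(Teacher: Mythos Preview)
The paper does not prove this proposition in the text; it is stated as a result of \cite{HuanPower}, with only the remark that it is shown there ``by representation theory'' via the power operation $\mathbb{P}_N$. Your overall outline --- decompose $QEll(\mathrm{pt}/\!\!/\Sigma_N)$ by conjugacy class, kill the non-rectangular partitions using transfers from Young subgroups in which the centralizer already agrees with the full centralizer, then compute the rectangular summands --- is the standard strategy (parallel to Strickland's argument for Morava $E$-theory) and is almost certainly the shape of the argument in \cite{HuanPower}. Your first two paragraphs are correct as written.

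The genuine gap is in the third paragraph, and you have in fact misidentified the mechanism. The claim that the image of the transfers on the $(d^e)$-summand is ``the ideal of elements that fail to be $\Sigma_e$-invariant'' and that ``the resulting quotient is the $\Sigma_e$-invariant subring'' does not make sense: $\Sigma_e$ sits \emph{inside} $\Lambda_{\Sigma_N}(g_{(d^e)})$ as a quotient of the centralizer, so there is no external $\Sigma_e$-action on $R(\Lambda_{\Sigma_N}(g_{(d^e)}))$ for which one could take invariants, and a quotient ring is in any case not a subring. What actually happens is dual to this. Induced characters from the wreath-Young subgroups $(\mathbb{Z}/d\wr\Sigma_k)\times(\mathbb{Z}/d\wr\Sigma_{e-k})$ vanish on every conjugacy class of $\mathbb{Z}/d\wr\Sigma_e$ whose projection to $\Sigma_e$ is an $e$-cycle, and a rank count (exactly as for $R(\Sigma_e)/I_{tr}\cong\mathbb{Z}$ in the classical case) shows the transfer ideal is precisely the ideal of such characters. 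The quotient is therefore computed by \emph{restriction} to the long-cycle stratum: one evaluates characters on elements of the form $(\vec a;\,(1\,2\cdots e))$, and the resulting $\Lambda$-group is a rank-one torus whose $\mathbb{Z}[q^{\pm}]$-algebra structure yields $\mathbb{Z}[q^{\pm}][q']/(q^d-q'^e)$. So the correct slogan is ``restrict to the $e$-cycle locus'', not ``take $\Sigma_e$-invariants''. This is exactly the obstacle you flag, and until it is replaced by the correct computation the argument does not close.
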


Applying the relation (\ref{tateqellequiv}), we can get the
conclusion below as a corollary of Proposition \ref{tategplem}, which is part of \cite[Theorem 7.4]{HuanPower}.
\begin{theorem}The Tate K-theory of symmetric groups modulo the
transfer ideal $I^{Tate}_{tr}$ classifies the finite subgroups of
the Tate curve. Explicitly,
\begin{equation}K_{Tate}(\mbox{pt}/\!\!/\Sigma_N)/I^{\Sigma_N}_{tr}\cong
\prod_{N=de} D(d, e)\label{tatec}\end{equation} 
where $D(d, e)$ is the ring defined in \eqref{defded}. And 
\[q':= \prod_{N=de}q'_{d, e}\] is the image of
$q$ under the power operation $P^{string}_N$ constructed in  \cite[Definition
3.15]{Gan13} and \begin{equation}I^{\Sigma_N}_{tr}:= \sum_{\substack{i+j=N,\\
N>j>0}}\mbox{Image}[I^{\Sigma_N}_{\Sigma_i\times\Sigma_j}:
K_{Tate}(\mbox{pt}/\!\!/\Sigma_i\times\Sigma_j)\longrightarrow
K_{Tate}(\mbox{pt}/\!\!/\Sigma_N)]\label{transferidealtatek}\end{equation}
is the transfer ideal of Tate K-theory. The product goes over
all the ordered pairs of positive integers $(d, e)$ such that
$N=de$.\label{tatefiniteclass} \end{theorem}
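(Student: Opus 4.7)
The plan is to deduce this theorem from Proposition \ref{tategplem} by extending scalars along $\mathbb{Z}[q^{\pm}] \hookrightarrow \mathbb{Z}((q))$. The key identity driving everything is the base-change relation (\ref{tateqellequiv}), which says that Tate K-theory of an orbifold is quasi-elliptic cohomology tensored up to $\mathbb{Z}((q))$. So my first step would be to take the isomorphism
\[QEll(\mbox{pt}/\!\!/\Sigma_N)/\mathcal{I}^{\Sigma_N}_{tr} \cong \prod_{N=de} \mathbb{Z}[q^{\pm}][q']/\langle q^d - q'^e \rangle\]
from Proposition \ref{tategplem} and apply $- \otimes_{\mathbb{Z}[q^{\pm}]} \mathbb{Z}((q))$ to both sides.

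On the left-hand side, since $\mathbb{Z}[q^{\pm}] \to \mathbb{Z}((q))$ is flat, tensoring commutes with the quotient, giving
\[\bigl(QEll(\mbox{pt}/\!\!/\Sigma_N) \otimes_{\mathbb{Z}[q^{\pm}]} \mathbb{Z}((q))\bigr) \big/ \bigl(\mathcal{I}^{\Sigma_N}_{tr} \otimes_{\mathbb{Z}[q^{\pm}]} \mathbb{Z}((q))\bigr).\]
The numerator is then identified with $K_{Tate}(\mbox{pt}/\!\!/\Sigma_N)$ via (\ref{tateqellequiv}). For the denominator, I would argue that the transfer maps $\mathcal{I}^{\Sigma_N}_{\Sigma_i \times \Sigma_j}$ for quasi-elliptic cohomology, being $\mathbb{Z}[q^{\pm}]$-module maps natural in the orbifold, pass through the tensor product to give the transfer maps $I^{\Sigma_N}_{\Sigma_i \times \Sigma_j}$ for Tate K-theory, and hence the images assemble to show that $\mathcal{I}^{\Sigma_N}_{tr} \otimes_{\mathbb{Z}[q^{\pm}]} \mathbb{Z}((q)) = I^{\Sigma_N}_{tr}$.

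On the right-hand side, because the product is finite, tensoring commutes with the product, and within each factor $\mathbb{Z}[q^{\pm}][q']/\langle q^d - q'^e \rangle$ the tensor product with $\mathbb{Z}((q))$ simply inverts the appropriate series, yielding $\mathbb{Z}((q))[q']/\langle q^d - q'^e \rangle$. This produces precisely the right-hand side of (\ref{tatec}). Finally, one must check that the element $q'$ on the right, defined by the Tate power operation $P^{Tate}$ of Ganter, is the image of the $q'$ arising from the quasi-elliptic power operation $\mathbb{P}_N$ under the base-change map; this should be a naturality statement comparing $P^{Tate}$ with $\mathbb{P}_N \otimes_{\mathbb{Z}[q^{\pm}]} \mathbb{Z}((q))$.

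The main obstacle is not the algebra of extending scalars, which is straightforward given flatness, but rather verifying the two compatibility assertions: (i) that the transfer ideal base-changes correctly, and (ii) that the power operations $\mathbb{P}_N$ and $P^{Tate}$ are intertwined by (\ref{tateqellequiv}). Both should be traceable to the construction of $\mathbb{P}_N$ in \cite{HuanPower}, since $\mathbb{P}_N$ is built so as to refine the Tate K-theory operation; granting that, the theorem reduces to a clean corollary of Proposition \ref{tategplem}.
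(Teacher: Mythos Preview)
Your proposal is correct and matches the paper's approach exactly: the paper states only that the theorem follows ``as a corollary of Proposition \ref{tategplem}'' by ``applying the relation (\ref{tateqellequiv}),'' and you have spelled out precisely that base-change argument in more detail than the paper itself provides. The two compatibility checks you flag (transfers and power operations passing through the tensor product) are implicitly assumed in the paper and are indeed handled in \cite{HuanPower}, so there is no gap.
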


\subsection{Power operation }

Moreover, via the power operation $\mathbb{P}_N$ we will construct a
new operation $$\overline{P}_N: QEll(X/\!\!/G) \longrightarrow
QEll(X/\!\!/G)\otimes_{\mathbb{Z}[q^{\pm}]}
QEll(\mbox{pt}/\!\!/{\Sigma_N})/\mathcal{I}^{\Sigma_N}_{tr}$$ of
quasi-elliptic cohomology. It is essential in the construction of
the universal finite subgroup of order $N$ of $Tate(q)$.
\begin{proposition} \label{addpower}
The composition
\begin{align*}\overline{P}_N: &QEll(X/\!\!/G)
\buildrel{\mathbb{P}_N}\over\longrightarrow
QEll(X^{\times N}/\!\!/{G\wr\Sigma_N})
\buildrel{res}\over\longrightarrow QEll(X^{\times N}/\!\!/{G\times
\Sigma_N})\\  &\buildrel{diag^*}\over\longrightarrow
QEll(X/\!\!/{G\times\Sigma_N}) \cong QEll(X/\!\!/G)\otimes_{\mathbb{Z}[q^{\pm}]} QEll(\mbox{pt}/\!\!/{\Sigma_N}) \\
  &\longrightarrow QEll(X/\!\!/G)\otimes_{\mathbb{Z}[q^{\pm}]} QEll(\mbox{pt}/\!\!/{\Sigma_N})/\mathcal{I}^{\Sigma_N}_{tr} \\ &\cong
QEll(X/\!\!/G)\otimes_{\mathbb{Z}[q^{\pm}]}\prod_{N=de}\mathbb{Z}[q^{\pm}][q'_{d, e} ]/\langle q^d-q'^e_{d, e}
\rangle\end{align*}defines a ring homomorphism, where $res$ is the
restriction map given by the inclusion
$$G\times\Sigma_N\hookrightarrow G\wr\Sigma_N\mbox{,    } (g, \sigma)\mapsto (g, \cdots g;
\sigma),$$ and $diag$ is the diagonal map $$X\longrightarrow X^{\times
N}\mbox{,    } x\mapsto (x, \cdots x).$$ In addition, we refer the reader to \cite[(4.1), (4.2)]{HuanPower} for the construction of the power operation
 $\{\overline{P}_N\}_N$ and \cite[(4.17)]{HuanPower} for the explicit formula of it. 
\end{proposition}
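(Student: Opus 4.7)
The plan is to check multiplicativity and additivity separately, since the only map in the composition that is not obviously a ring homomorphism is the power operation $\mathbb{P}_N$ itself: by its construction in \cite{HuanPower} as an external/internal symmetric power, $\mathbb{P}_N$ is multiplicative but only \emph{exponentially} additive, in the sense that
\[
\mathbb{P}_N(x+y) \;=\; \sum_{i+j=N}\, \mathrm{Ind}_{G\wr(\Sigma_i\times\Sigma_j)}^{G\wr\Sigma_N}\bigl(\mathbb{P}_i(x)\boxtimes\mathbb{P}_j(y)\bigr).
\]
All the other arrows in the composition---the restriction along $G\times\Sigma_N\hookrightarrow G\wr\Sigma_N$, the diagonal pullback $diag^*$, the Künneth isomorphism $QEll(X/\!\!/(G\times\Sigma_N))\cong QEll(X/\!\!/G)\otimes_{\mathbb{Z}[q^{\pm}]}QEll(\mathrm{pt}/\!\!/\Sigma_N)$, the quotient by $\mathcal{I}^{\Sigma_N}_{tr}$, and the classification isomorphism from Proposition \ref{tategplem}---are honest ring maps (the first three because each is a pullback/restriction in equivariant K-theory, the fourth because $\mathcal{I}^{\Sigma_N}_{tr}$ is an ideal, and the last by Proposition \ref{tategplem}). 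So multiplicativity of $\overline{P}_N$ follows at once from multiplicativity of $\mathbb{P}_N$.

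The work is therefore to show additivity, i.e.\ that the failure of $\mathbb{P}_N$ to be additive is killed after passing to the quotient by the transfer ideal. First I would restrict the identity above along $G\times\Sigma_N\hookrightarrow G\wr\Sigma_N$; under this restriction the induction from the wreath subgroup $G\wr(\Sigma_i\times\Sigma_j)$ becomes, after double-coset analysis, a sum of transfers from subgroups of the form $G\times(\Sigma_i\times\Sigma_j)\subseteq G\times\Sigma_N$ (the only double coset contributing a term that survives pullback along the diagonal $X\to X^{\times N}$ is the one where the two $G$-factors agree, because the other double cosets produce restrictions to $X^{\Sigma_i}\times X^{\Sigma_j}$ whose pullback along the total diagonal agrees with a genuine $\Sigma_i\times\Sigma_j$ transfer in the $\Sigma_N$-slot). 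Then applying $diag^*$ and the Künneth isomorphism turns each cross term $\mathrm{Ind}(\mathbb{P}_i(x)\boxtimes\mathbb{P}_j(y))$ into an element of
\[
QEll(X/\!\!/G)\otimes_{\mathbb{Z}[q^{\pm}]}\mathrm{Image}\!\left[\mathcal{I}^{\Sigma_N}_{\Sigma_i\times\Sigma_j}:QEll(\mathrm{pt}/\!\!/(\Sigma_i\times\Sigma_j))\to QEll(\mathrm{pt}/\!\!/\Sigma_N)\right],
\]
i.e.\ an element of $QEll(X/\!\!/G)\otimes\mathcal{I}^{\Sigma_N}_{tr}$. For $0<j<N$ each such term is therefore zero modulo the transfer ideal, and only the extreme terms $i=N$, $j=0$ and $i=0$, $j=N$ survive, giving exactly $\overline{P}_N(x)+\overline{P}_N(y)$.

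The main obstacle is the double-coset bookkeeping in the second paragraph: one must check that the Mackey decomposition of $\mathrm{res}\circ\mathrm{Ind}$ applied to $\mathbb{P}_i(x)\boxtimes\mathbb{P}_j(y)$, after the diagonal pullback, really lands in the image of the $\Sigma_i\times\Sigma_j\subset\Sigma_N$ transfer tensored with $QEll(X/\!\!/G)$, rather than contributing nontrivially to the $QEll(X/\!\!/G)$ factor in a way that is not absorbed by the transfer ideal. This is where the precise definition of $\mathbb{P}_N$ from \cite{HuanPower}, together with the identification of $G\wr(\Sigma_i\times\Sigma_j)\cap(G\times\Sigma_N)$ with $G\times(\Sigma_i\times\Sigma_j)$ in each relevant double coset, is used. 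Once this is in place, $\overline{P}_N(x+y)=\overline{P}_N(x)+\overline{P}_N(y)$ and $\overline{P}_N(1)=1$, $\overline{P}_N(xy)=\overline{P}_N(x)\overline{P}_N(y)$ follow directly, completing the proof that $\overline{P}_N$ is a ring homomorphism.
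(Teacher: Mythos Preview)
The paper does not actually prove this proposition here: Section~\ref{qellshort} is a summary of background, and this result is quoted from Proposition~6.5 of \cite{HuanPower} (as the introduction says explicitly). So there is no in-paper proof to compare against; your argument is essentially the proof one would find in \cite{HuanPower}, and it is the standard one for showing that a total power operation becomes additive after killing the transfer ideal.

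Your outline is correct in its essentials: $\mathbb{P}_N$ is multiplicative, the only defect is additivity, the binomial identity $\mathbb{P}_N(x+y)=\sum_{i+j=N}\mathrm{Ind}_{G\wr(\Sigma_i\times\Sigma_j)}^{G\wr\Sigma_N}(\mathbb{P}_i(x)\boxtimes\mathbb{P}_j(y))$ holds, and the cross terms with $0<j<N$ land in the transfer ideal after the remaining maps are applied. One small imprecision worth tightening: in your Mackey step you speak of ``the only double coset contributing a term that survives pullback along the diagonal,'' as if there were several double cosets to sort through. In fact, since $G\wr(\Sigma_i\times\Sigma_j)$ already contains all of $G^N$ and the diagonal copy $G\times\Sigma_N$ contains all of $\Sigma_N$, their product is the whole wreath product, so there is exactly \emph{one} double coset $(G\times\Sigma_N)\backslash(G\wr\Sigma_N)/(G\wr(\Sigma_i\times\Sigma_j))$. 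The Mackey formula therefore gives directly
\[
\mathrm{res}^{G\wr\Sigma_N}_{G\times\Sigma_N}\circ\mathrm{Ind}^{G\wr\Sigma_N}_{G\wr(\Sigma_i\times\Sigma_j)}
\;=\;
\mathrm{Ind}^{G\times\Sigma_N}_{G\times(\Sigma_i\times\Sigma_j)}\circ\mathrm{res}^{G\wr(\Sigma_i\times\Sigma_j)}_{G\times(\Sigma_i\times\Sigma_j)},
\]
with no further case analysis needed. After $diag^*$ and the K\"unneth identification this is visibly $(\text{something in }QEll(X/\!\!/G))\otimes\mathcal{I}^{\Sigma_N}_{\Sigma_i\times\Sigma_j}(\text{something})$, hence dies modulo $\mathcal{I}^{\Sigma_N}_{tr}$. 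With that simplification your argument goes through cleanly.
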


The operation $\overline{P}_N$ sends $q$ to $q'$. In addition, it extends uniquely to a ring homomorphism 
\[\overline{P^{string}}_N: K_{Tate}(X/\!\!/G) \longrightarrow K_{Tate}(X/\!\!/G)\otimes_{\mathbb{Z}((q))}\prod_{N=de}D(d, e)\cong 
\prod_{N=de}D(d, e)\otimes_{i_{d, e}} K_{Tate}(X/\!\!/G)\] constructed in \cite[Section 5.4]{Gan07}.
The operations gives an isogeny of formal groups
\begin{equation}
    \label{linepfgp}
    \overline{P^{string}}^*_N:  \coprod_{N=de}   i_{d, e}^*Tate(q)  \rightarrow Tate(q)
\end{equation}

\section{The universal finite subgroup of  the Tate curve}\label{mainpart}

In this section we construct a  finite subgroup over $D_N$ of order $N$ of $i_N^*Tate(q)$ as the kernel of an isogeny $\psi$, 
which is defined in \eqref{formulapsio}. An explicit description of $\mbox{Ker} (\psi)$ is given in \eqref{univformula}.
We prove in Section \ref{maincc} that $\mbox{Ker} (\psi)$ is the universal finite subgroup of $Tate(q)$ of order $N$.


Let $D_N$  denote the product 
\[\prod_{N=de} D(d, e).\] And let  $i_N$ denote the map
\[i_N:=\prod_{N=de} i_{d, e}:  \mathbb{Z}((q)) \rightarrow D_N, \quad q\mapsto \prod_{N=de}q.\] 
And let $j_N$ denote the map \[j_N:=\prod_{N=de} j_{d, e}:  \mathbb{Z}((q)) \rightarrow D_N, \quad q\mapsto \prod_{N=de} q'_{d, e}.\]
We have the isomorphisms 
\begin{align*}
i_{N}^*Tate(q) &\cong \coprod_{N=de}   i_{d, e}^*Tate(q) ;\\
j_{N}^*Tate(q) &\cong \coprod_{N=de}   j_{d, e}^*Tate(q) ,
\end{align*}
and the isogeny over $D_N$
\begin{equation}\phi'_N:= \coprod_{N=de}\phi'_{d, e}: i_{N}^*Tate(q) \longrightarrow j^*_{N}Tate(q)\label{phin}\end{equation}
where each $\phi'_{d, e}$ is defined in \eqref{phiso}.

The kernel of $\phi'_{N}$ is the $D_N$-scheme
\[\coprod_{N=de}T[d, e]. \]

From the pullback squares \eqref{ijdepbk} we obtain the pullback squares below.
\begin{equation} \label{ijnpbkn}
\xymatrix{i^*_{N}Tate(q) \ar[r] \ar[d] & Tate(q) \ar[d] \\
\mbox{Spec}\; (D_N) \ar[r]_{i_{N}^*} &\mbox{Spec} \; (\mathbb{Z}((q)))} \quad     \xymatrix{j^*_{N}Tate(q) \ar[r] \ar[d] & Tate(q) \ar[d] \\
\mbox{Spec}\; (D_N) \ar[r]_{j_{N}^*} &\mbox{Spec} \; (\mathbb{Z}((q)))}  
\end{equation}

To study the finite subgroups of $Tate(q)$ of order $N$, it suffices to work inside the world of $N$-torsion points of $Tate(q)$. 
By Theorem  \ref{torsionT}, the $N$-torsion points $Tate(q)[N]$ of $Tate(q)$ is isomorphic to 
\begin{equation}Tate(q)[N]\cong T[N]\otimes_{\mathbb{Z}[q^{\pm}]} \mathbb{Z}((q)) \cong \coprod_{i=0}^{N-1} \mbox{Spec} \; (\mathbb{Z}((q))[x_k]/\langle x_k^N-q^i \rangle).\label{tateqn}\end{equation}
By Proposition \ref{tateqellequiv} and Proposition \ref{NtorsQEll}, we have the isomorphism of rings of functions
\begin{equation}
    \label{otateqn}
    O_{Tate(q)[N]}\cong K^*_{Tate}(\mbox{pt}/\!\!/(\mathbb{Z}/N\mathbb{Z})).
\end{equation}

 The $N$-torsion points of $i^*_{N}Tate(q)$ is \[(i^*_{N}Tate(q))[N] = i^*_{N}(Tate(q)[N]).\] And 
the $N$-torsion points of $j^*_{N}Tate(q)$ is \[(j^*_{N}Tate(q))[N] = j^*_{N}(Tate(q)[N]).\]
Thus, we have the pullback squares below.
\begin{equation} \label{ijnpbkn}
\xymatrix{i^*_{N}Tate(q)[N] \ar[r] \ar[d] & Tate(q)[N] \ar[d] \\
\mbox{Spec}\; (D_N) \ar[r]_{i_{N}^*} &\mbox{Spec} \; (\mathbb{Z}((q)))} \quad     \xymatrix{j^*_{N}Tate(q)[N] \ar[r] \ar[d] & Tate(q)[N] \ar[d] \\
\mbox{Spec}\; (D_N) \ar[r]_{j_{N}^*} &\mbox{Spec} \; (\mathbb{Z}((q)))}  
\end{equation}
And we have the pushout squares of the induced maps on the rings of functions
\begin{equation}
    \label{ijnpbkin}
\xymatrix{    \mathbb{Z}((q)) \ar[r]^{i_N} \ar[d] & D_N
    \ar[d] \\  K_{Tate}(\mbox{pt}/\!\!/(\mathbb{Z}/N\mathbb{Z})) \ar[r]
    &  D_N\otimes_{i_N} K_{Tate}(\mbox{pt}/\!\!/(\mathbb{Z}/N\mathbb{Z}))}
\end{equation}
\begin{equation}
    \label{ijnpbkjn}
\xymatrix{    \mathbb{Z}((q)) \ar[r]^{j_N} \ar[d] & D_N
    \ar[d] \\  K_{Tate}(\mbox{pt}/\!\!/(\mathbb{Z}/N\mathbb{Z})) \ar[r]
    &  D_N\otimes_{j_N} K_{Tate}(\mbox{pt}/\!\!/(\mathbb{Z}/N\mathbb{Z}))}
\end{equation}

By the universal property of the pushout, there is a unique map
\begin{equation}\psi^*: D_N\otimes_{j_N} K_{Tate}(\mbox{pt}/\!\!/(\mathbb{Z}/N\mathbb{Z}))\longrightarrow
D_N\otimes_{i_N} K_{Tate}(\mbox{pt}/\!\!/(\mathbb{Z}/N\mathbb{Z}))\label{isoznop}\end{equation} making the diagrams below commute. \begin{equation}
\xymatrix{\mathbb{Z}((q))\ar[r]^{j_N}\ar[d] &D_N \ar[d]\ar@/^/[rdd]^{p} &\\  K_{Tate}(\mbox{pt}/\!\!/(\mathbb{Z}/N\mathbb{Z})) \ar[r]\ar@/_/[rrd]^{\overline{P^{string}}_N} &
 D_N\otimes_{j_N} K_{Tate}(\mbox{pt}/\!\!/(\mathbb{Z}/N\mathbb{Z})) \ar@{.>}[dr]|{\psi^*} &\\ & &
D_N\otimes_{i_N} K_{Tate}(\mbox{pt}/\!\!/(\mathbb{Z}/N\mathbb{Z})).}\label{psidef}\end{equation}

We apply Spec to the diagram \eqref{psidef} and obtain the commutative diagrams of group schemes below. \begin{equation}\xymatrix{i^*_NTate(q)[N]\ar[rd]|{\psi} \ar@/^/[rrd]^{\overline{P^{string}}_N^*}\ar@/_/[rdd] \\ &j_N^*Tate(q)[N]\ar[r]\ar[d] &Tate(q)[N] \ar[d] \\
&\mbox{Spec}\; D_N \ar[r]_{j^*_N} &\mbox{Spec}\;(\mathbb{Z}((q))) }\label{psidefinv}\end{equation}

Next we show the explicit formula for $\psi^*$.

We define an element \[x_k\in K_{Tate}(\mbox{pt}/\!\!/(\mathbb{Z}/N\mathbb{Z})) \] in (\ref{xk}). For
any $\mathbb{Z}((q))$-algebra $R$ with connected spectrum, the formula of the map \[x_k:
Tate(q)[N](R)\longrightarrow R\] is $$x_k([a, t])=\begin{cases}a,
&\text{if $t=\frac{k}{N}$ with $k=0, 1, \cdots N-1$;}\\ 0,
&\text{if $[a, t]\neq [a', \frac{k}{N}]$ for any
$a'$.}\end{cases}$$ Note that, in $Tate(q)[N](R)$, $[a, t+1]= [aq, t]$.

By the formula for the operation 
$$\overline{P^{string}}_N: K_{Tate}(\mbox{pt}/\!\!/(\mathbb{Z}/N\mathbb{Z}))\longrightarrow  D_N\otimes_{i_N} K_{Tate}(\mbox{pt}/\!\!/(\mathbb{Z}/N\mathbb{Z}))$$
 given in Proposition \ref{addpower}, it sends
$\prod\limits_{m=0}^{N-1}q$ to \[\prod\limits_{m=0}^{N-1}
q'_{d, e}\otimes 1= q',\] and sends each $x_k$ to
\[\prod\limits_{\substack{N=de\\
e|k}}\prod\limits_{\alpha_m=0}^{e-1} {q'_{d, e}}^{-\alpha_m}\otimes x_m^d\]
where $m=\frac{k}{e}+\alpha_md$.

Therefore, 
by the commutativity of the diagram \eqref{psidef},  
$\psi^*$ sends $1\otimes (\prod\limits_{m=0}^{N-1}q)$ and  $ q' \otimes 1$ to the image of \[\prod\limits_{m=0}^{N-1}q\in   K_{Tate}(\mbox{pt}/\!\!/(\mathbb{Z}/N\mathbb{Z}))\]  under $\overline{P^{string}}_N$, i.e.
\[\prod\limits_{m=0}^{N-1}
q'_{d, e}\otimes 1= q',\]  and it sends $x_k\otimes 1$ to the image of $x_k$ under $\overline{P^{string}}_N$, i.e. 
\[\prod\limits_{\substack{N=de\\
e|k}}\prod\limits_{\alpha_m=0}^{e-1} {q'_{d, e}}^{-\alpha_m}\otimes x_m^d\]
with each $m=\frac{k}{e}+\alpha_md$, 
and $\psi^*$ sends 
 $ q  \otimes 1$ to the image of  $ q\in D_N$ under $p$, i.e. 
 \[ q \otimes 1\in  D_N\otimes_{i_N} K_{Tate}(\mbox{pt}/\!\!/(\mathbb{Z}/N\mathbb{Z})).\]

\bigskip

Next, we give an explicit formula for $\psi$.

For any $D(d, e)$-algebra $ A$ with connected spectrum, we have the equivalences for the $N$-torsion points
\emptycomment{take $N$-torsion points on the RHS}
\begin{equation}i^*_{d, e} Tate(q)[N](A)=
\frac{A^*\times (\mathbb{Z}/N\mathbb{Z})}{\langle (q, -1)\rangle}\mbox{ }\mbox{
and } \mbox{    }  j^*_{d, e} Tate(q)[N](A)= \frac{A^*\times
(\mathbb{Z}/N\mathbb{Z})}{\langle (q'_{d, e}, -1)\rangle}. \label{stateA}\end{equation} Under this identification, the map
\[\psi_{d, e}(A): i^*_{d, e} Tate(q)[N](A)\longrightarrow j^*_{d, e} Tate(q)[N](A)\] is defined by 
\begin{equation} [a, x]\mapsto [a^d, ex].\label{psifor}\end{equation} It is well-defined since $[q^d,
-e]= [q'^e, -e]=0$ in $j^*_{d, e} Tate(q)[N](A)$.

The map \begin{equation}\psi: i^*Tate(q)[N]\longrightarrow j^*Tate(q)[N]\label{formulapsio}\end{equation} defined in
the diagram (\ref{psidef}) can be constructed as the coproduct of
the maps
$$\psi_{d, e}: i^*_{d, e} Tate(q)[N]\longrightarrow j^*_{d, e}Tate(q)[N].$$ For each $(d, e)$, 
\[(\psi^*_{d,
e}x_k)[a, x]=x_k(\psi_{d, e}[a, x])=x_k([a^d, ex]).\]
Note that $\psi^*_{d, e}x_k$ is in $  D_N\otimes_{i_N} K_{Tate}(\mbox{pt}/\!\!/(\mathbb{Z}/N\mathbb{Z}))$.

With the formula given in \eqref{psifor}, we have  
an explicit formula for $(\mbox{Ker}(\psi))(A)$.
\begin{align}(\mbox{Ker}(\psi))(A) &=\!\prod\limits_{N=de}\! \mbox{Ker} (\psi_{d, e}) (A)\\
&=
\!\prod\limits_{N=de}\!\{[a ,x]\!\in\! i^*_{d,
e}Tate(q)[N](A)\mbox{ }|\mbox{ }[a^d, ex]=[1,
0]\}.\label{univformula} \end{align} 

$\mbox{Ker}(\psi)$ is isomorphic to \[
\mbox{Ker}(\psi'_N)=\coprod_{N=de} T[d, e]
\]
where each $T[d, e]$    
 is the kernel of the isogeny \[\phi'_{d, e}: i^*_{d, e}Tate(q) \rightarrow j^*_{d, e}Tate(q)\] defined in \eqref{phiso}.

\begin{remark}
The isogeny $\psi$ is the same as the restriction of the isogeny \[\phi'_N: i_{N}^*Tate(q) \longrightarrow j^*_{N}Tate(q)\] defined in \eqref{phin}
to the $N$-torsion part $i^*_{N} Tate(q)[N]$.
By \cite[Theorem A, Corollary 6.7]{AndoIsog}, there is a ring operation
\[ \Psi_N: D_N\otimes_{i_N} K_{Tate}(-) \longrightarrow  D_N\otimes_{j_N} K_{Tate}( - )   \] such that 
$\Psi_N(\mathbb{C}P^{\infty}) = {\phi'}_N^*$. And $\Psi_N(B(\mathbb{Z}/N\mathbb{Z}))$ is the homomorphism $\psi^*$ defined in \eqref{isoznop}.
\end{remark}

\bigskip
\section{The main theorem}\label{maincc}

Now we are ready to state the main conclusion of this paper.

In this section, by a $\mathbb{Z}((q))$-algebra $R$, we mean an faithfully flat $\mathbb{Z}((q))$-algebra $R$ containing a compatible system of $N$'th roots $q^{\frac{1}{N}}$ of $q$ for every $N$, i.e. for every integer $N\geq 1$, we are given $Y_N\in R^*$ such that $Y_1=q$ and $(Y_{NM})^M= Y_N$ for every $M$, $N\geq 1$.

\begin{theorem} The  universal finite subgroup of order $N$ of $Tate(q)$ is the pair $$G_{univ}:=(D_N, \mbox{Ker} (\psi)< i_N^*Tate(q)[N])$$
in the sense that for any
$\mathbb{Z}((q))$-algebra $R$, there is a 1-1 correspondence which is natural
\begin{equation}\xymatrix{\{\mathbb{Z}((q))-\mbox{algebra maps }D_N\longrightarrow R
\}\ar@{^{<}-_{>}}[d]\\  \{\mbox{finite subgroup schemes }
G\leqslant Tate(q)[N]_R \mbox{   of order }N\}}\label{11main}\end{equation} where $Tate(q)[N]_R$ is the
pullback
$$\xymatrix{Tate(q)[N]_R\ar[r]\ar[d] &Tate(q)[N]\ar[d] \\ \mbox{Spec}\; (R)\ar[r]
&\mbox{Spec}\;(\mathbb{Z}((q )) )}.$$ It is a group scheme over
$\mbox{Spec}\;( R)$. In other words, given a subgroup scheme
$G\leqslant Tate(q)[N]_R$ of degree $N$, there exists a unique pullback
square
\begin{equation}\xymatrix{G \ar@{^{(}->}[r]\ar@{^{(}->}[d] &\mbox{Ker} (\psi) \ar@{^{(}->}[d] \\ Tate(q)[N]_R\ar[r]\ar[d] &i_N^*Tate(q)[N] \ar[d] \\
\mbox{Spec} \; (R)\ar[r] &\mbox{Spec}\; (D_N)
.}\label{universaldef}\end{equation} \label{mainthm}\end{theorem}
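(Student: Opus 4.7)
The plan is to verify the bijection (\ref{11main}) by showing that the natural transformation $f\mapsto f^*(\mathrm{Ker}\,\psi)$ admits an inverse. In one direction, once $\mathrm{Ker}\,\psi\subset s^*Tate[N]$ is known to be a finite flat subgroup scheme of degree $N$ over $O_{Sub_{\Sigma_N}}$, for any $\mathbb{Z}[q^{\pm}]$-algebra map $f:O_{Sub_{\Sigma_N}}\longrightarrow R$ the pullback $f^*(\mathrm{Ker}\,\psi)$ is automatically a subgroup of $T[N]_R$ of degree $N$. So the work splits into three pieces: (i)~verify that $\mathrm{Ker}\,\psi$ has degree $N$, (ii)~construct a classifying map $f_G$ from any given subgroup $G\leq T[N]_R$, and (iii)~check the two constructions are mutually inverse.

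For (i), I would use the product decomposition $\psi=\coprod_{N=de}\psi_{d,e}$ on the components of $s^*Tate[N]$, where $\psi_{d,e}:s^*_{d,e}Tate[N]\longrightarrow t^*_{d,e}Tate[N]$ is the isogeny $[a,x]\mapsto[a^d,ex]$ described in Section \ref{mapsi}. A direct calculation on the short exact sequence $0\to\mu_N\to T[N]\to\mathbb{Z}/N\mathbb{Z}\to 0$ shows that $\mathrm{Ker}\,\psi_{d,e}$ sits in an extension $0\to\mu_d\to\mathrm{Ker}\,\psi_{d,e}\to\mathbb{Z}/e\mathbb{Z}\to 0$ over $O_{Sub_{d,e}}$, hence is finite flat of degree $de=N$.

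For (ii), given $G\leq T[N]_R$ of degree $N$, I would define its \emph{type} to be the pair $(d,e)$ where $d$ is the rank of $G\cap\mu_N$ and $e=N/d$ is the order of the image of $G$ in $\mathbb{Z}/N\mathbb{Z}$. Flatness of $G$ and of $\mu_N$ forces the rank of $G\cap\mu_N$ to be locally constant on $\mathrm{Spec}\,R$, yielding a canonical idempotent decomposition $R=\prod_{N=de}R_{d,e}$. On each factor $R_{d,e}$, a lift $g=[a,d/N]$ of a generator of $G/(G\cap\mu_N)\cong\mathbb{Z}/e\mathbb{Z}$ is determined up to $\mu_d$, so $a^d$ is well-defined; the condition $eg\in G\cap\mu_N$ together with the defining relation $a^N=q^{\pm d}$ in $T[N]$ produces a canonical element $q'\in R_{d,e}^\times$ satisfying $q'^e=q^d$. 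This gives a ring map $O_{Sub_{d,e}}\longrightarrow R_{d,e}$, and assembling factor-wise defines $f_G:O_{Sub_{\Sigma_N}}\longrightarrow R$.

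The main obstacle will be verifying that $f_G$ and $f\mapsto f^*(\mathrm{Ker}\,\psi)$ are genuinely mutually inverse. On $\mathbb{Z}((q))$-algebras this is essentially Theorem \ref{tatefiniteclass} together with the isomorphism $T_{tors}\otimes\mathbb{Z}((q))\cong Tate(q)_{tors}$ from Theorem \ref{torsionT}, and the identification of generators of $\mathrm{Ker}\,\psi_{d,e}$ from step (i) with the subgroup data extracted in (ii) provides the concrete match. Extending the bijection from $\mathbb{Z}((q))$-algebras to arbitrary $\mathbb{Z}[q^{\pm}]$-algebras will require flatness of $O_{Sub_{\Sigma_N}}$ over $\mathbb{Z}[q^{\pm}]$ together with a direct verification using the explicit formulas for $\psi^*$ on the generators $x_k$ derived at the end of Section \ref{mapsi}, showing that the element $q'$ produced in step (ii) agrees with the $q'$ coming from the power operation $\overline{P}_N$.
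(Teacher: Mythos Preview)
Your proposal follows essentially the same three-step architecture as the paper: from a subgroup $G\leq T[N]_R$ extract the pair $(d,e)$ via the intersection $G\cap\mu_N$, lift a generator of $G/(G\cap\mu_N)$ to produce a well-defined $q'\in R^\times$ with $q'^e=q^d$, and conversely pull back $\mathrm{Ker}\,\psi$ along a given map $O_{Sub_{\Sigma_N}}\to R$; then check these are inverse. Your step~(i), explicitly verifying that $\mathrm{Ker}\,\psi_{d,e}$ sits in an extension $0\to\mu_d\to\mathrm{Ker}\,\psi_{d,e}\to\mathbb{Z}/e\mathbb{Z}\to 0$ and hence has degree $N$, is a useful addition that the paper leaves implicit.

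The one place where you diverge is the final paragraph: you propose to verify mutual inverseness first over $\mathbb{Z}((q))$-algebras by invoking Theorem~\ref{tatefiniteclass}, and then descend to $\mathbb{Z}[q^{\pm}]$ by flatness. This detour is unnecessary and slightly misleading. Theorem~\ref{tatefiniteclass} only identifies the ring $O_{Sub_N}$; it does not by itself supply the bijection with subgroups, so you would still need the direct verification even over $\mathbb{Z}((q))$. The paper instead checks the two constructions are inverse directly over $\mathbb{Z}[q^{\pm}]$, using the explicit formula (\ref{univformula}) for $(\mathrm{Ker}\,\psi)(R)$ on each component: once you know $\mathrm{Ker}\,\psi_{d,e}(A)=\{[a,x]:[a^d,ex]=[1,0]\}$, matching it against the $(d,e,q')$ data extracted in your step~(ii) is an elementary computation, and no passage through $\mathbb{Z}((q))$ or appeal to the power-operation formula for $\psi^*$ on the $x_k$ is needed. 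Dropping that last paragraph and replacing it with the direct comparison will make your argument both shorter and cleaner.
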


\begin{remark}
Given the subgroup $G\leqslant Tate(q)[N]_R$ in the construction, there is  an isogeny \[\psi_R: i^*_NTate(q)[N]_R\rightarrow j_N^*Tate(q)[N]_R\] over $R$ such that $G$ is a subgroup of the kernel of $\psi_R$, where $j^*_NTate(q)[N]_R$ and  $i^*_NTate(q)[N]_R$  are the pullbacks
\begin{equation}\xymatrix{j_N^*Tate(q)[N]_R\ar[r]\ar[d] &j_N^*Tate(q)[N] \ar[d] \\
\mbox{Spec} \; (R)\ar[r] &\mbox{Spec}\; (D_N)
} \quad \xymatrix{i_N^*Tate(q)[N]_R\ar[r]\ar[d] &i_N^*Tate(q)[N] \ar[d] \\
\mbox{Spec} \; (R)\ar[r] &\mbox{Spec}\; (D_N)
,}\label{universaldefj}\end{equation} 
and $\psi_R$ is constructed as the base change of the isogeny \[\psi: i^*_NTate(q)[N]\rightarrow j^*_NTate(q)[N].\]
\end{remark}

\begin{proof}[Proof of Theorem \ref{mainthm}]

We prove the conclusion by three steps.

\textbf{Step I:} 
Since $\mbox{Spec}\;(R)$ is connected, the image of each map 
\[\mbox{Spec}\;(R) \rightarrow \mbox{Spec}\; (D_N)= \coprod_{N=de} \mbox{Spec}\; (D(d, e))\]
lies in some component $\mbox{Spec}\; (D(d, e))$ of $ \mbox{Spec}\; (D_N)$.

We show, given a finite subgroup $G<Tate(q)[N]_R$ of
order $N$,  we can construct a map \[F^*_{d, e}: \mbox{Spec} \; (R)\longrightarrow \mbox{Spec}
\;
(D(d,e ))\] for some pair of integers $(d, e)$ with $N=de$.

For the subgroup $G<Tate(q)[N]_R$, we have the exact sequences of group schemes over $R$ and the commutative diagrams 

\[\xymatrix{0\ar[r] &\mathbb{G}_m\ar[r]
&Tate(q)[N]_R\ar[r] &\mathbb{Q}/\mathbb{Z}\ar[r] &0 \\ 
 0\ar[r] &\mathbb{G}_m\cap G \ar[r] \ar@{^{(}->}[u] &G\ar[r] \ar@{^{(}->}[u]
&\mathbb{Z}[\frac{1}{e}]/\mathbb{Z}\ar[r] \ar@{^{(}->}[u] &0}\] for some positive integer $e \geq 1$ dividing $N$. 
\emptycomment{The subgroup has order N as a subgroup scheme but the set of R-points of H does not. It can be
larger if R is disconnected. It can be smaller if some of these points are not defined over R, such as the points of $\mu_d$ if R does not have the roots of unity. It may always be smaller if R has positive characteristic p that divides N. The quotient group
$H/R^*\cap H$ is contained inside the R-points of $Q/Z$.}
The group $\mathbb{G}_m\cap G$ is the kernel of the projection $G\longrightarrow \mathbb{Q}/\mathbb{Z}$.
Let $d$ denote the order of $\mathbb{G}_m\cap G$. We have $N=de$.

\emptycomment{
Let $d=|R^*\cap H|$ and
$e=|H/(R^*\cap H)|$. Then $H/(R^*\cap
H)=\mathbb{Z}[\frac{1}{e}]/\mathbb{Z}\cong
\mathbb{Z}/e\mathbb{Z}$. 
The group $R^*\cap H$ is the kernel of the projection $$
H\longrightarrow H/(R^*\cap H)\mbox{,   }[x, \frac{1}{e}]\mapsto
\frac{1}{e}.$$}

In addition, we have the exact sequences of group schemes over $R$ 
\begin{equation}\xymatrix{0\ar[r] &\mu_d \ar@{=}[d]\ar[r] & G\ar[r]\ar@{^{(}->}[d] &\mathbb{Z}[\frac{1}{e}]/\mathbb{Z}
\ar@{.>}[d]^{r}_{\exists !}\ar[r] &0 \\0\ar[r] &\mu_d
\ar[r] & Tate(q)_R\ar[r]^{[d]}&Tate(q^d)_R \ar[r] &0
}\label{diagqr'}\end{equation} where $Tate(q)_R$ is the pullback
$$\xymatrix{Tate(q)_R\ar[r]\ar[d] &Tate(q)\ar[d] \\ \mbox{Spec}\; (R)\ar[r]
&\mbox{Spec}  \; (\mathbb{Z}((q)))}$$ and $[d]_R$ is the isogeny over $R$ 
\[[d]_R([x, \lambda])= [x^d, \lambda].\]
There exists a unique homomorphism \[r: \mathbb{Z}[\frac{1}{e}]/\mathbb{Z} \rightarrow Tate(q^d)_R\] making the diagram
\eqref{diagqr'} commute.

Let $a\in\mathbb{Z}[\frac{1}{ e}]/\mathbb{Z}$ be a
generator. \begin{equation}r(a)=[ q'_R, \frac{1}{e}]\label{q'def}\end{equation} for some $q'_R\in R^*$. Note that, in $Tate(q^d)_R $,  $0=e \cdot [q'_R, \frac{1}{e}]= [q'^e_R, 1]=
[q'^e_R q^{-d}, 0]$ and \[q'^e_R= q^d.\] Then we define a ring map $$F_{d, e}: D(d, e) \longrightarrow R$$ by sending $q$ to $q$ and sending $q'_{d, e}$ to $q'_R$.

Then the map $F^*_{d, e}$ defined by the composition \[ \mbox{Spec} \;( R)\longrightarrow\mbox{Spec} \; ( D(d, e) ) \hookrightarrow \mbox{Spec}
\;(D_N)\] is the one we want.

\bigskip

\textbf{Step II:} We show that, given a $\mathbb{Z}((q))$-algebra map
$D_N\buildrel{g}\over\longrightarrow R$, we can
construct a finite subgroup $G < Tate(q) [N]_R$ of order $N$.

Since $Tate(q)[N]$ is a finite flat group scheme over $\mathbb{Z}((q))$  for any integer $N\geq 1$,  and $R$ is an faithful flat $\mathbb{Z}((q))$-algebra, thus             
the group scheme $Tate(q)[N]_R$ is finite flat. 
The order of it is defined by the locally constant map 
 \begin{align*}
     \delta:
\mbox{Spec} \; (R) & \longrightarrow \mathbb{Z}_{\geq 0} \\
p &\mapsto \mbox{rank}_{R_p} (O_{Tate(q)[N]_R})_p
 \end{align*}
 
Then there exists a $\mathbb{Z}((q ))$-algebras $R_{d, e}$ for each pair of  integers $(d, e)$ with $N= de$ such that 
\[\mbox{Spec} \; (R_{d, e}) = \{x\in \mbox{Spec} \; (R)\mbox{   }|\mbox{  }\delta(x)= d\}.\] And
$R$ is the product
\[\prod\limits_{N=de} R_{d, e}.\] 

Thus the ring homomorphism \[\alpha: D_N \longrightarrow R\] is  the product of the 
maps $$\prod_{N=de}(\alpha_{d, e}: D(d, e)\longrightarrow R_{d, e}).$$

If there is no element $x$ in $R^*$ such that
$x^e=q^d$, then $\alpha_{d, e}=0$. In this case the contribution of $\alpha_{d, e}$ to
the subgroup $G$ is $0$. There is only one factor $\alpha_{d_0, e_0}$ that is
non-trivial.  We have the pull-back diagrams
$$\xymatrix{G\ar[d]\ar[r] &\mbox{Ker}(\psi_{d_0, e_0}) \ar[d]
\\ Tate(q)[N]_{R_{d_0, e_0}}\ar[d]\ar[r] &i^*_{d_0, e_0} Tate(q)[N] \ar[d] \\ \mbox{Spec} \; (R_{d_0,
e_0})\ar[r]^{\alpha_{d_0, e_0}^*} &\mbox{Spec} \; (D(d_0, e_0))}$$

And, for any $D(d, e)-$algebra $A$ with connected spectrum, $G(A)$ is the factor
\[\{[t, x]\in i^*_{d, e}Tate(q)[N](A)\mbox{ }|\mbox{ }[t^d,
ex]=[1, 0]\}.\] Thus, the order of $G$ is $N$.
\emptycomment{ Should x in R?}
\bigskip

\textbf{Step III:} In this step we check the two maps in
Step I and Step II are the inverse of each other. Then, we have the 1-1 correspondence (\ref{11main}).

Let  $H <Tate(q)[N]_R$ denote a finite group of
order $N$. 
By Step I, we have a $\mathbb{Z}((q))$-algebra map $$F_{d, e}: D(d, e) \longrightarrow R$$
for some pair $(d, e)$ of positive integers with $N=de$. 
By the construction of $F_{d, e}$ in Step I,  $H$ is the pullback in the diagram 
$$\xymatrix{H\ar[d]\ar[r] &\mbox{Ker}(\psi_{d, e}) \ar[d]
\\ Tate(q)[N]_{R_{d, e}}\ar[d]\ar[r] &i^*_{d, e} Tate(q)[N] \ar[d] \\ \mbox{Spec} \; (R_{d,
e})\ar[r]^{F_{d, e}^*} &\mbox{Spec} \; (D(d, e) )}$$
By the uniqueness of pullback, we know the subgroup $G$ of $Tate(q)[N]_R$ obtained from $F_{d, e}:  D(d, e) \longrightarrow R$ in Step II is isomorphic to $H$.

For the other direction, given a $\mathbb{Z}((q))$-algebra map
$D_N\buildrel{\alpha}\over\longrightarrow R$, we can
construct a finite subgroup $G<Tate(q)[N]_R$ as the pullback of the diagram
$$\xymatrix{G\ar[d]\ar[r] &\mbox{Ker}(\psi_{d, e}) \ar[d]
\\ Tate(q)[N]_{R_{d, e}}\ar[d]\ar[r] &i^*_{d, e} Tate[N] \ar[d] \\ \mbox{Spec} \; (R_{d,
e})\ar[r]^{\alpha_{d, e}^*} &\mbox{Spec} \; (D(d, e))}$$
Especially, there is only one pair $(d, e)$ that is relevant to the construction of $G$. Then, the construction in Step I recovers the map $\alpha$.

\emptycomment{ Why are they inverse to each other?}
\end{proof}


\newpage
\bibliographystyle{amsplain}


\end{document}